\title{Sharp Threshold Asymptotics for the Emergence of Additive Bases}
\author
{Anant Godbole\\
Department of Mathematics and Statistics\\
East Tennessee State University\and Chang Mou Lim\\
Department of Mathematics\\
University of Chicago
\and
Vince Lyzinski\\
Department of Applied Mathematics and Statistics\\
The Johns Hopkins University\and
Nicholas George Triantafillou\\
Department of Mathematics\\
University of Michigan, Ann Arbor}
\begin{document}
\def\ep{\varepsilon}
\def\aln{\alpha n}
\def\maln{(2-\alpha)n}
\def\lr{\left(}
\def\lc{\left\{}
\def\rc{\right\}}
\def\rr{\right)}
\def\nt{{n\choose2}}
\def\nc{\lceil n/2\rceil}
\def\ic{\lceil i/2\rceil}
\def\nr{{n\choose r}}
\def\ns{{n\choose s}}
\def\tv{{d_{{\rm TV}}}}
\def\P{{\rm Po}}
\def\cl{{\cal L}}
\def\ta{{\tilde{a}}}
\def\tb{{\tilde{b}}}
\def\qed{\vbox{\hrule\hbox{\vrule\kern3pt\vbox{\kern6pt}\kern3pt\vrule}\hrule}}
\def\ca{{\cal A}}
\def\vca{{\vert\cal A\vert}}
\def\ck{{\cal K}}
\def\ch{{\cal H}}
\def\cc{{\cal C}}
\def\cs{{\cal S}}
\def\p{\mathbb P}
\def\v{\mathbb V}
\def\e{\mathbb E}
\def\z{\mathbb Z}
\def\l{\lambda}
\def\a{\alpha}
\def\exp{\mathrm{exp}}
\def\n{\noindent}
\newtheorem{thm}{Theorem}
\newtheorem{lm}[thm]{Lemma}
\newtheorem{rem}[thm]{Remark}
\newtheorem{exam}[thm]{Example}
\newtheorem{prop}[thm]{Proposition}
\newtheorem{defn}[thm]{Definition}
\newtheorem{cm}[thm]{Claim}
\maketitle
\begin{abstract}
A set $\ca\subseteq[n]\cup\{0\}$ is said to be a 2-additive basis  for $[n]$  if each $j\in[n]$ can be written as $j=x+y, x,y\in\ca, x\le y$.  If we pick each integer in $[n]\cup\{0\}$ independently with probability $p=p_n\to0$, thus getting a random set $\ca$, what is the probability that we have obtained a 2-additive basis?  We address this question when the target sum-set is $[(1-\alpha)n,(1+\alpha)n]$ (or equivalently $[\alpha n, (2-\alpha) n]$) for some $0<\alpha<1$.  Under either model, the Stein-Chen method of Poisson approximation is used, in conjunction with Janson's inequalities, to tease out a very sharp threshold for the emergence of a 2-additive basis.  Generalizations to $k$-additive bases are then given.  
\end{abstract}
\section{Introduction}  In 1956, Erd\H os \cite{erdos} answered a question posed in 1932 by Sidon by proving that there exists an infinite sequences of natural numbers ${\cal S}$ and constants $c_1$ and $c_2$ such that for large $n$, \begin{equation}c_1\log n\le r_2(n)\le c_2\log n,\end{equation}
where, for $k\ge 2$, $r_k(n)$ is the {\it number of ways} of representing the integer $n$ as the sum of $k$ elements from ${\cal S}$, a so-called {\it asymptotic basis} of order $k$.  The result was generalized in the 1990 work of Erd\H os and Tetali \cite{et} which established that there exists an infinite sequence ${\cal S}$ for which (1) was true for each fixed $k\geq 2$, i.e., for each large $n$, 
\begin{equation} r_k(n)=\Theta(\log n). 
\end{equation}
To achieve this result, Erd\H os and Tetali constructed a random sequence $\mathcal S$ of natural numbers by including $z$ in $\mathcal S$ with probability
$$p(z)=\begin{cases} C\frac{(\log{z})^{1/k}}{z^{(k-1)/k}}, & \mbox{if } z>z_0 \\ 0 & \mbox{otherwise } \end{cases}$$
where $C$ is a determined constant and $z_0$ is the smallest constant such that $p(z_0)\leq 1/2$.  They then showed that this random sequence is a.s. an asymptotic basis of order $k$ and that (2) holds a.s. for large $n$.

A natural finite variant of the above problem concerns the notion of $k$-additive bases. 
\begin{defn} \emph{With $[n]:=\{1,2,\ldots,n\}$, a set $\ca\subseteq[n]\cup\{0\}$ is said to be a \emph{$k$-additive basis} for $[n]$ if each $j\in[n]$ can be written as $j=x_1+x_2+\ldots+x_k$,  $x_i\in\ca,$ $i=1,\ldots,k$.}\end{defn}
\noindent Note that this definition allows for $x_i=x_j$ $i\neq j$.  In \cite{et}, Erd\H os and Tetali showed that in some probability space, almost all infinite sequences $\mathcal S$ satisfy (2) and are asymptotic bases of order $k$.  It is natural to then ask, for finite $\ca,$ how small $\ca$ can be while still being a $k$-additive basis.   For $k=2$, if $\ca$ is a 2-additive basis we must clearly have ${{\vca+1}\choose{2}}\ge n$, so that $\vca\ge\sqrt{2n}(1+o(1))=1.4142(1+o(1))\sqrt{n}$.  
Extensive work, using predominantly analytic techniques, has been done to improve this trivial lower bound to $\vca\ge1.428\sqrt{n}$ (\cite{moser}); $\vca\ge1.445\sqrt{n}$ (\cite{gn}); $\vca\ge1.459\sqrt{n}$ (\cite{yu};, and $\vca\ge1.463\sqrt{n}$ (\cite{ford}).
The best upper bound appears to be $\vca\le\sim1.871\sqrt{n}$ (\cite{mrose}, \cite{hh}).  

In this paper, we will use a probability model in which each integer in $[n]\cup\{0\}$ is chosen to be in $\ca$ with equal (and low) probability $p=p_n$.  We will then give sharp bounds on the probability that the random set $\ca$ is a $k$-additive basis.  It is evident that smaller numbers must be present in an additive basis, since, e.g., the only way to represent 1 in a 2-additive basis is as 1+0, and so in the random model edge effects come into play.  Therefore, the random ensemble is unlikely to form an additive basis unless we adopt a different approach.  This may be done in two ways, which lead to the following alternative definitions: 

\begin{defn}  \emph{ A set $\ca\subseteq[n-1]\cup\{0\}$ is said to be a \emph{ modular $k$-additive basis} for $[n]$ if each $j\in[n-1]\cup\{0\}$ can be written as $j=x_1+x_2+\ldots+x_k\pmod n$,  $x_i\in\ca,$ $i=1,\ldots,k$.}\end{defn}

\begin{defn}  \emph{A set $\ca\subseteq[n]\cup\{0\}$ is said to be a \emph{ truncated $k$-additive basis} for $[n]$ if each $j\in[\alpha n, (k-\alpha)n]$ can be written as $j=x_1+x_2+\ldots+x_k$,  $x_i\in\ca,$ $i=1,\ldots,k; \alpha\in(0,1)$.}\end{defn}

It turns out that definitive results using Definition 1 have been proved in the papers of Yadin \cite{ya} and Sandor \cite {sandor}, and we thus focus on developing results using Definition 2.  Although our model differs from that of Erd\H os and Tetali (our set is constructed using constant probability $p$ as opposed to the $p(z)$ used in \cite{et}), the output threshold probabilities for the size of a truncated (or modular) $k$-additive basis end up being remarkably close to the input probabilities used by Tetali-Erd\H os to construct their asymptotic bases.  We will stress the similarities between our results as appropriate.  It also bears mentioning that the threshold size for our random $\ca$ to be a $2$-additive basis is, up to a logarithmic factor, similar to the bounds outlined above using the analytic techniques.

Our work is organized as follows:  We present threshold results on truncated $2$-additive bases in Section 2, using the Stein-Chen method of Poisson approximation (see \cite{bhj}), which is an alternative to the Janson  inequalities and Brun's sieve used in \cite{ya},\cite{sandor}. This method allows one to not just find the limiting probability that $\ca$ forms an additive basis, but also to approximate the probability distribution $\cl(X)$ of the random variable $X$, defined as the number of integers $j$ that cannot be expressed as a sum of elements in $\ca$. In the modular case, we will, accordingly, show as a corollary how our result partially extend those in \cite{ya}, \cite{sandor}. 
In Section 4, we consider similar questions for truncated $k$-additive bases; at times, the Janson exponential inequalities (\cite{as}, \cite{janson}) are used to estimate some critical baseline quantities that were calculated exactly in Section 2 for $k=2$. 

\begin{rem}  \emph{Throughout the rest of the paper, we suppress the descriptor  ``additive", referring simply to ``truncated $k$-bases," and, occasionally, ``modular $k$-bases". 
}
\end{rem}

\section{$2$-Additive Bases}  
We begin by investigating truncated $2$-bases in our random model outlined above. We will then state the analogous result for modular 2-bases and we will explain how the modular results (and their generalization to arbitrary k-bases) extends the existing work in \cite{ya} and \cite{sandor}.  As with all results in this paper, we seek to find the threshold value at which the bases emerge.   

We begin by selecting each integer from $\{0,1,\ldots, n\}$ independently with probability $p=p_n$.  We thus obtain a random set $\ca$, and denote by $X$ the number of integers in $[\alpha n, (2-\alpha)n]$ that cannot be written in the form $x_1+x_2$, $x_i\in\ca$.  Evidently, $\ca$ is a truncated $2$-basis if and only if $X=0$.  We can write $X=\sum_{j=\lceil\alpha n\rceil}^{\lfloor\maln\rfloor}I_j$, where $I_j$ equals one or zero according as the integer $j$ cannot or can be represented as a 2-sum of elements in $\ca$.  To simplify the notation a bit, we will write simply $\sum_{j=\alpha n}^{\maln}I_j$ for $\sum_{j=\lceil\alpha n\rceil}^{\lfloor\maln\rfloor}I_j$ in the sequel.  The main results of this section are summarized in the following theorem:
\begin{thm}
\label{thm:2trun}
Pick each integer in $[n]\cup\{0\}$ independently with probability $p=p_n\to0$ thus getting a random set $\ca$.  
 
i. Let $Y\sim\P(\lambda=\e(X))$.  Let $\delta>0$, then for all 
\begin{equation}\label{p1}p\ge\sqrt{\frac{(\frac{1}{\alpha}+\delta)\log n}{n}},\end{equation}it hold that
$d_{TV}(X,Y)  \rightarrow 0$  ($n\rightarrow\infty$).

ii.  With 
\begin{equation}\label{p2} p={\sqrt{\frac{\frac{2}{\alpha}\log n-\frac{2}{\alpha}\log \log n+A_n}{n}}}\end{equation}
\[A_n\to\infty\Rightarrow \p(\ca\ {\rm is\ a\ truncated\ 2-basis})\to1\enspace(n\to\infty);\]
\[A_n\to-\infty\Rightarrow \p(\ca\ {\rm is\ a\ truncated\ 2-basis})\to0\enspace(n\to\infty);\] and 
\[A_n\to A\Rightarrow \p(\ca\ {\rm is\ a\ truncated\ 2-basis})\to\exp\lc-{2\alpha e^{-\alpha A/2}}\rc\enspace(n\to\infty)\]
\end{thm}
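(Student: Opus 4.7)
The plan is to derive part (ii) from part (i) by a careful asymptotic evaluation of $\lambda_n:=\e(X)$. Part (i) gives $d_{TV}(X,\P(\lambda_n))\to 0$ whenever $p\ge\sqrt{(1/\alpha+\delta)\log n/n}$ for some $\delta>0$, and so in that regime $\p(X=0)=e^{-\lambda_n}+o(1)$. Since the $p$ in (\ref{p2}) satisfies this lower bound unless $A_n\to-\infty$ so quickly that $p^2 n$ drops below $(1/\alpha+\delta)\log n$, the task reduces to identifying $\lim\lambda_n$ in each regime and handling that last edge case by monotonicity.

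I would compute $\p(I_j=1)$ exactly, using the following key observation: for fixed $j\in[\alpha n,(2-\alpha)n]$, the unordered pairs $\{x_1,x_2\}$ with $x_1+x_2=j$ and $0\le x_1\le x_2\le n$ are pairwise disjoint as subsets of $\{0,1,\ldots,n\}$, since $x_1+x_2=x_1'+x_2'=j$ forces $\{x_1,x_2\}=\{x_1',x_2'\}$ whenever the two pairs share an element. The events ``this pair is not contained in $\ca$'' are therefore mutually independent, giving
\[\p(I_j=1)=(1-p^2)^{a_j}(1-p)^{b_j},\]
where $b_j\in\{0,1\}$ is the indicator that $j$ is even (the diagonal pair $(j/2,j/2)$ contributes a factor $1-p$) and $a_j$ counts the off-diagonal pairs, with $a_j+b_j=\lfloor j/2\rfloor+1$ for $j\le n$ and $a_j+b_j=n-\lceil j/2\rceil+1$ for $j>n$.

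Approximating $(1-p^2)^{a_j}\sim e^{-p^2 a_j}$ and noting $(1-p)^{b_j}=1+o(1)$, the sum $\lambda_n=\sum_j\p(I_j=1)$ is geometric-like and symmetric around $j=n$; each half is dominated by its endpoint, giving
\[\lambda_n\sim 2\sum_{j=\alpha n}^{n} e^{-p^2 j/2}\sim\frac{4}{p^2}\,e^{-p^2\alpha n/2}.\]
Substituting $p^2 n=(2/\alpha)\log n-(2/\alpha)\log\log n+A_n$ yields $e^{-p^2\alpha n/2}=(\log n/n)\,e^{-\alpha A_n/2}$ and $4/p^2\sim 2\alpha n/\log n$, so $\lambda_n\to 2\alpha e^{-\alpha A/2}$ when $A_n\to A$, $\lambda_n\to 0$ when $A_n\to\infty$, and $\lambda_n\to\infty$ when $A_n\to-\infty$. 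The three conclusions then follow from $\p(X=0)\to e^{-\lim\lambda_n}$ whenever part (i) applies; for the remaining case of fast $A_n\to-\infty$, I would invoke monotonicity: for any $C>0$ the natural coupling $\ca(p_n)\subseteq\ca(p_n^*)$, with $(p_n^*)^2 n=(2/\alpha)\log n-(2/\alpha)\log\log n-C$, yields $\p_{p_n}(\ca\text{ is a truncated 2-basis})\le\exp(-2\alpha e^{\alpha C/2})+o(1)$, which tends to $0$ as $C\to\infty$.

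The main obstacle is the precision required in the asymptotic analysis of $\lambda_n$: although $p^2 n\sim(2/\alpha)\log n$ at leading order is essentially forced by requiring $\lambda_n=\Theta(1)$, pinning down the exact constant $2\alpha e^{-\alpha A/2}$ requires simultaneously tracking the exponential factor $e^{-p^2\alpha n/2}$ and the polynomial prefactor $4/p^2$, identifying the dominant window of size $O(1/p^2)=O(n/\log n)$ near each of the two endpoints $\alpha n$ and $(2-\alpha)n$, and verifying that parity effects together with the interior of the interval contribute only $1+o(1)$.
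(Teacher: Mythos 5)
Your treatment of part (ii) is correct and follows essentially the paper's own approach: the observation that the pairs $\{x_1,x_2\}$ with $x_1+x_2=j$ are pairwise disjoint, so $\p(I_j=1)=(1-p^2)^{a_j}(1-p)^{b_j}$ exactly, the geometric-sum estimate $\lambda_n\sim\frac{4}{p^2}e^{-p^2\alpha n/2}$, the substitution of (\ref{p2}) to get $\lambda_n\to 2\alpha e^{-\alpha A/2}$, and the monotonicity argument to handle $A_n\to-\infty$ fast enough to leave the window (\ref{p1}). These are all in line with the paper's Proposition 6 and the surrounding discussion, and your independence observation for $\p(I_j=1)$ is a clean way to get the exact formula.

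However, there is a genuine gap: you take part (i) as given and only derive part (ii) from it, but part (i) is itself the main technical content of the theorem and requires proof. In the paper this is done via the Stein--Chen method: one constructs, for each $j$, an auxiliary family $\{J_{j_i}\}$ coupled to $\{I_i\}$ so that $\cl(J_{j_1},J_{j_2},\ldots)=\cl(I_1,I_2,\ldots\mid I_j=1)$ by probabilistically ``de-selecting'' elements of pairs summing to $j$; one then observes the coupling is monotone in the sense $\p(I_i=1,J_{j_i}=0)=0$, and bounds
\[d_{TV}(X,Y)\le\frac{1-e^{-\lambda}}{\lambda}\sum_j\Bigl(\p^2(I_j=1)+\p(I_j=1)\sum_{i\ne j}\p(I_i=0,J_{j_i}=1)\Bigr).\]
Estimating $\p(I_i=0,J_{j_i}=1)$ requires conditioning on the number $B_i$ of 2-sums of $i$ present pre-coupling and bounding the per-pair removal probability by $C(p)=2p+2p^3+p^2$, which leads to two sums $\Sigma_1=O(n^2p^3e^{-\alpha n p^2/2})$ and $\Sigma_2=o(\Sigma_1)$, both of which vanish precisely when $p$ satisfies (\ref{p1}). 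Without this (or some equivalent Chen--Stein/Janson argument establishing $d_{TV}(X,Y)\to 0$ on the window (\ref{p1})), your derivation of part (ii) rests on an unproved foundation. You should at minimum sketch the coupling and the total-variation bound, or replace part (i) with an alternative argument (e.g., a second-moment/Janson route giving $\p(X=0)\to e^{-\lambda}$), before part (ii) can be considered complete.
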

\begin{proof}

Here and throughout the paper, let ${\cal S}$ denote the random sumset generated by $\ca$.  The first step in the proof will be to calculate the precise asymptotics of $\e(X)$
\begin{prop} With $X$ as above,  $$\e(X)=\sum_{j=\alpha n}^{\maln}\p(I_j=1)\approx2\sum_{j=\aln}^n(1-p^2)^{j/2}\asymp\frac{4}{p^2}\exp\{-np^2\alpha/2\}.$$
\end{prop}
\begin{proof}
 This is a critical computation and needs to be justified for the entire range of $p'$s that we encounter, in particular for all $p$ satisfying \eqref{p1}.  The first direction is easy:
\begin{eqnarray*}
2\sum_{j=\aln}^n(1-p^2)^{j/2}&\le&2\sum_{j=\aln}^\infty(1-p^2)^{j/2}\\
&\le& \frac{2(1-p^2)^{\aln/2}}{1-\sqrt{1-p^2}}\\
&\le&\frac{4}{p^2}(1-p^2)^{\aln/2}\\
&\le&\frac{4}{p^2}\exp\{-np^2\alpha/2\},
\end{eqnarray*}
using the inequalities $1-x\le e^{-x}$ and $\sqrt{1-p^2}\le 1-\frac{p^2}{2}$.  For the other direction, we have
\begin{eqnarray*}
2\sum_{j=\aln}^n(1-p^2)^{j/2}&\ge&\frac{2(1-p^2)^{\aln/2}-2(1-p^2)^{n/2}}{1-\sqrt{1-p^2}}\\
&=&\frac{2(1-p^2)^{\aln/2}(1-(1-p^2)^{\maln/2})}{1-\sqrt{1-p^2}}\\
&\ge&\frac{2(1-p^2)^{\aln/2}\lr1-e^{-np^2(1-\alpha)/2}\rr}{\frac{p^2}{2}+\frac{p^4}{8}}\\
&\asymp&\frac{4}{p^2}(1-p^2)^{\aln/2}\\
&\asymp&\frac{4}{p^2}\exp\{-np^2\alpha/2\}.
\end{eqnarray*}
\end{proof}
With $p$ defined as in \eqref{p2} we see
that $\e(X)=(1+o(1))(2\alpha\cdot\exp\{-\alpha A_n/2\})$.  As is often the case with threshold phenomena, Markov's inequality can be used to easily establish the first part of ii, as we have
$\p(X\ge1)\le\e(X)\to0$
provided that $A_n\to\infty$ in \eqref{p2}.  

To establish the second and third statements in part ii.~of the theorem, we go beyond estimating the point probability $\p(X=0)$, using instead the Stein-Chen method of Poisson approximation \cite{bhj} to establish a total variation approximation for the distribution $\cl(X)$ of $X$.  If the distribution of $X$ is approximately Poisson, then we will have $e^{-\lambda}-\ep_n\le\p(X=0)\le e^{-\lambda}+\ep_n$, where $\lambda=\e(X)$ is the mean of the approximating Poisson variable and $\ep_n$ is the total variation error bound for the approximation.   We have that $\e(X)\to\infty$ if $A_n\to-\infty$; $\e(X)\to0$ if $A_n\to\infty$; and $\e(X)\to 2\alpha e^{-\alpha A/2}$ if $A_n\to A$.  Thus, if we can show that $\ep_n$ tends to zero in a window around $$p={\sqrt{\frac{\frac{2}{\alpha} \log n}{n}}},$$ we will have that $\p(X=0)\to0$ if  
$A_n\to-\infty$; $\p(X=0)\to1 $ if $A_n\to\infty$; $\p(X=0)\to\exp\{-2\alpha e^{-\alpha A/2}\}$ if $A_n\to A$.  

With $Y\sim\P(\lambda=\e(X))$, and throughout writing $\tv(A,B)$ instead of the more appropriate $\tv(\cl(A),\cl(B))$, we seek to bound $$d_{TV}(X,Y):=\sup_{A\subseteq{\mathbb Z}^+}\vert\p(X\in A)-\p(Y\in A)\vert.$$ 
Following \cite{bhj}, we first need to determine, for each $j$ separately, an auxiliary sequence  of variables $J_{j_i}$ defined on the same probability space with the property that 
\begin{equation}\label{dist}\cl(J_{j_1},J_{j_2},\ldots)=\cl(I_1,I_2,\ldots|I_j=1).\end{equation}

Our explicitly constructed coupling of the $J_{j_i}$'s is as follows:
If $I_j=1$, set $J_{j_i}=I_i$ for all $i\neq j$.
If $I_j=0$, for all pairs $x_1,x_2\in \ca $ such that $x_1\neq x_2$ and $x_1+x_2=j,$ remove $x_1$ from $\ca$ with probability $\frac{p(1-p)}{1-p^2},$ remove $x_2$ from $\ca$ with probability  $\frac{p(1-p)}{1-p^2},$
and remove both $x_1$ and $x_2$ from $\ca$ with probability $\frac{p^2+1-2p}{1-p^2}$. If $x\in\ca$ and $x+x=i,$ then we remove $x$ from $\ca$ with probability 1.  Finally, 
define  $J_{j_i}=1$ if $
i\not\in {\cal S}$ { after the above coupling is implemented.}  It is clear that \eqref{dist} is satisfied since we have ``de-selected" offending integers based on the conditional probability of one or both integers in a pair being absent, given that both are not present.  The total variation bounds derived in \cite{bhj} are expressed in terms of the probability that the coupled indicator variables are different after the coupling is implemented; i.e. $I_i=1,J_{j_i}=0$ or $I_i=0,J_{j_i}=1$.
Now, $\p(I_i=1,J_{j_i}=0)=0$, since if integer $i$ is not present in $\mathcal S$, it cannot magically appear after some integers have been de-selected.

The formula we need thus reduces to
$$d_{TV}(X,Y)\leq \frac{1-e^{-\lambda}}{\lambda}\sum_j\bigg(\p^2(I_j=1)+\p(I_j=1)\cdot\sum_{i\neq j}\p(I_i=0, J_{j_i}=1)\bigg).$$
There are two terms above.  The first,
  \[\frac{1-e^{-\lambda}}{\lambda}\sum_j\p^2(I_j=1)\leq \frac{1}{\lambda}\max_i\p(I_i=1)\sum_i\p(I_j=1)=\max_i\p(I_i=1),\]
and thus is bounded above by
$\p(I_{\lceil\aln\rceil}=1)\approx(1-p^2)^{\aln/2}$.  This bound tends to zero if $1/\sqrt{n}=o(p)$, a condition that is satisfied for $p$ satisfying \eqref{p1}.
  
To bound the second term in the sum, we begin by noting
$$\frac{1-e^{-\lambda}}{\lambda}\sum_j\left[\p(I_j=1)\sum_{i\neq j}\p(I_i=0, J_{j_i}=1)\right]\le\max_j\sum_{i\neq j}\p(I_i=0, J_{j_i}=1).$$
We further bound this second term by conditioning on the number of 2-sums of $i$ that are present pre-coupling, denoting this number by $B_i$.  Denote the sumset of $\ca$ post-coupling by ${\cal S}^*$.  We see that 
$$\p(I_i=0, J_{j_i}=1)=\sum_{k=1}^{\lceil \frac{i+1}{2}\rceil} \p(i\not\in {\cal S}^*\vert B_i=k)\cdot \p(B_i=k).$$  Note that if there exists $x$ such that $i=2x$, then 
$\p(B_i=k)={{\lceil \frac{i+1}{2}\rceil-1}\choose{k}}p^{2k}(1-p^2)^{\lceil \frac{i+1}{2}\rceil-k-1}(1-p)+ {{\lceil \frac{i+1}{2}\rceil-1}\choose{k-1}}p^{2k-2}(1-p^2)^{\lceil \frac{i+1}{2}\rceil-k}p,$ otherwise (when $i$ is odd and $f(i):=\lceil \frac{i+1}{2}\rceil$ to simplify the notation)
\begin{align*}
&\p(B_i=k)={{f(i)}\choose{k}}p^{2k}(1-p^2)^{f(i)-k}\\
&={{f(i)-1}\choose{k}}p^{2k}(1-p^2)^{f(i)-k}+ {{f(i)-1}\choose{k-1}}p^{2k}(1-p^2)^{f(i)-k}\\
&\leq{{f(i)-1}\choose{k}}p^{2k}(1-p^2)^{f(i)-k}+ {{f(i)-1}\choose{k-1}}p^{2k-2}(1-p^2)^{f(i)-k}p\\
&\asymp {{f(i)-1}\choose{k}}p^{2k}(1-p^2)^{f(i)-1-k}(1-p)+ {{f(i)-1}\choose{k-1}}p^{2k-2}(1-p^2)^{f(i)-k}p
\end{align*}
Our next step is to bound $\p(i\not\in {\cal S}^*\vert B_i=k)$ via 
$$\p(i\not\in {\cal S}^*\vert B_i=k)\leq \left( \p(\text{a given 2-sum of }i\text{ removed}\vert B_i=k)\right)^k.$$  To bound the above term, we will assume all elements of the 2-sums of $i$ are part of 2-sums of $j$ and thus have positive probability of being removed by the coupling process.  Fix $x,y$ $x\neq y$, such that $x+y=i$.  For ease of notation, define $P_x$ to be the event that $x$ is part of a pre-coupling 2-sum of $j$, and analogously define $P_y$.  Further define $R_x$ to be the event $x$ is removed from $\ca$ by the coupling, and analogously define $R_y$.  Consider the following simple calculations: 
$$\p(P_x,\ P_y^c,\ R_x)=p(1-p)\left(1-\frac{p(1-p)}{1-p^2}\right)\le p,$$
\n since the only undesirable outcome is if only the ``other" component of the 2-sum of 0 is removed in lieu of $x$;
$$\p(P_x,\ P_y,\ R_x,\ R_y^c)=p^2\frac{p(1-p)}{1-p^2}\lr1-\frac{p(1-p)}{1-p^2}\rr\le p^3;$$
\n and
$$\p(P_x,\ P_y,\ R_x,\ R_y)=p^2\lr1-\frac{p(1-p)}{1-p^2}\rr^2\le p^2.$$
In the case where $x=y$, the only case we would need to consider is 
$$\p(P_xR_x)=p.$$
From this, we see that $$\p(\text{a given 2-sum of }i \text{ removed}\ |\ B_i=k)\leq 2p+2p^3+p^2.$$ Let $C(p)=2p+2p^3+p^2$.  Our above calculations yield that $\max_j\sum_{i\neq j}\p(I_i=0, J_{j_i}=1) \leq\Sigma_1+\Sigma_2$, where
$$\Sigma_1:=\max_j \sum_{i\neq j}\sum_{k=1}^{\lceil \frac{i+1}{2}\rceil}{{\lceil \frac{i+1}{2}\rceil-1}\choose{k}}p^{2k}(1-p^2)^{\lceil \frac{i+1}{2}\rceil-k-1}(1-p)(C(p))^k$$ and
$$\Sigma_2:=\max_j \sum_{i\neq j}\sum_{k=1}^{\lceil \frac{i+1}{2}\rceil}{{\lceil \frac{i+1}{2}\rceil-1}\choose{k-1}}p^{2k-2}(1-p^2)^{\lceil \frac{i+1}{2}\rceil-k}p(C(p))^k.$$
Now, 
\begin{align*}
\Sigma_1&=\max_j \sum_{i\neq j}(1-p)\left[(1-p^2+p^2C(p))^{\lceil \frac{i+1}{2}\rceil-1}-(1-p^2)^{\lceil \frac{i+1}{2}\rceil-1}\right]\\
&\leq (1-p)\max_j\sum_{i\neq j}\bigg(\left\lceil \frac{i+1}{2}\right\rceil -1\bigg)C(p)p^2(1-p^2+C(p)p^2)^{\lceil \frac{i+1}{2}\rceil-2}\\
&\asymp \max_j\sum_{i\neq j}\ic C(p)p^2e^{-\ic p^2}\\
&\leq n\cdot\frac{(2-\alpha )n}{2}C(p)p^2e^{-\aln p^2/2}\\
&=O\left(n^2p^3e^{-\aln p^2/2}\right)
\end{align*}
and hence $\Sigma_1\rightarrow 0$ if $p$ satisfies \eqref{p1}.  For $\Sigma_2$, we have
\begin{align*}
\Sigma_2&=C(p) p \max_j \sum_{i\neq j}\sum_{k=1}^{\lceil \frac{i+1}{2}\rceil}{{\lceil \frac{i+1}{2}\rceil-1}\choose{k-1}}p^{2k-2}(1-p^2)^{\lceil \frac{i+1}{2}\rceil-k}(C(p))^{k-1}\\
&=C(p) p \max_j \sum_{i\neq j} (1-p^2+C(p)p^2)^{\lceil \frac{i+1}{2}\rceil-1}\\
&\asymp C(p) p \max_j \sum_{i\neq j}e^{-\ic p^2}\\
&\leq C(p) p ne^{-\aln p^2/2}\\
&=o(\Sigma_1)
\end{align*}
and we have that $d_{TV}(X,Y)\rightarrow 0$ if $p$ satisfies \eqref{p1},
and for $p$ in this range
$$|\p(X=0)-e^{-\lambda}|\rightarrow 0$$
 which finishes the proof.  Note that if we consider a $p$ not in this range, the result will hold by monotonicity.
\end{proof}
The modular 2-basis result is proved similarly, and suppressing all details we state the result:
\begin{thm}
\label{thm:old}
Pick each integer in $[n-1]\cup\{0\}$ independently with probability $p=p_n$ thus getting a random set $\ca$.  
Then,

i.  Let $X$ denote the number of missing integers in the modulo-2 sumset of $\ca$, and let $Y\sim\P(\lambda=\e(X))$.  Let $\delta>0$, then for all 
$$p\ge\sqrt{\frac{(1+\delta)\log n}{n}}$$
it holds that $d_{TV}(X,Y)\rightarrow 0$.

ii.  If we let
\[p={\sqrt{\frac{2 \log n+A_n}{n}}} \text{ where $\vert A_n\vert=o(\log n)$,} \] then
\[A_n\to\infty\Rightarrow \p(\ca\ {\rm is\ a\ modular\ 2-basis})\to1\enspace(n\to\infty);\]
\[A_n\to-\infty\Rightarrow \p(\ca\ {\rm is\ a\ modular\ 2-basis})\to0\enspace(n\to\infty);\]
\[A_n\to A\in{\mathbb R}\Rightarrow \p(\ca\ {\rm is\ a\ modular\ 2-basis})\to\exp\lc-{e^{-A/2}}\rc\enspace(n\to\infty).\]
\end{thm}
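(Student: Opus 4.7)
The plan is to parallel the argument given for Theorem~\ref{thm:2trun}, exploiting the cyclic symmetry of $\z/n\z$ to simplify the bookkeeping. The overall scheme is: compute $\e(X)$ precisely, apply Markov's inequality to dispose of the $A_n\to\infty$ regime, and then establish a Poisson approximation for $\cl(X)$ via the Stein--Chen coupling to handle the remaining two cases.

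For the first step, the crucial simplification in the modular setting is that no edge effects arise. For every target $j\in[n-1]\cup\{0\}$, the number of unordered pairs $\{x,y\}\subseteq[n-1]\cup\{0\}$ with $x+y\equiv j\pmod n$ equals $\nc$, up to a correction of order $O(1)$ coming from the at most two diagonal pairs $(x,x)$ with $2x\equiv j\pmod n$, which contribute a factor $(1-p)$ rather than $(1-p^2)$. Thus $\p(I_j=1)=(1-p^2)^{\nc}(1+o(1))$ uniformly in $j$, and hence $\e(X)\asymp n\exp\{-np^2/2\}$. Substituting $p^2=(2\log n+A_n)/n$ yields $\e(X)\to e^{-A/2}$ when $A_n\to A$, $\e(X)\to 0$ when $A_n\to\infty$, and $\e(X)\to\infty$ when $A_n\to-\infty$; Markov's inequality then disposes of the first regime.

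For the Poisson approximation, I would define the auxiliary variables $J_{j_i}$ on a common probability space by the direct analog of the coupling used in the proof of Theorem~\ref{thm:2trun}: if $I_j=1$, set $J_{j_i}=I_i$; otherwise, loop over all pre-coupling modular 2-sum pairs $\{x_1,x_2\}$ of $j$ in $\ca$ and remove $x_1$, $x_2$, or both with the same conditional probabilities as before (and deterministically remove the unique element of any diagonal pair). This yields $\cl(J_{j_1},J_{j_2},\ldots)=\cl(I_1,I_2,\ldots\mid I_j=1)$, and again $\p(I_i=1,J_{j_i}=0)=0$. The Stein--Chen bound then decomposes into two terms: the first is bounded by $\max_i\p(I_i=1)\asymp\exp\{-np^2/2\}$, which is $o(1)$ for $p\ge\sqrt{(1+\delta)\log n/n}$; the second is bounded by conditioning on the number $B_i$ of pre-coupling 2-sums of $i$, exactly as in the truncated proof, but with $\ic$ now replaced by the uniform count $\nc$. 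The resulting expressions $\Sigma_1\lesssim n^2 p^3\exp\{-np^2/2\}$ and $\Sigma_2=o(\Sigma_1)$ both tend to $0$ in the claimed range, by the same geometric-series calculation that controls the truncated case (with $\alpha=1$).

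The main obstacle is conceptual rather than technical: one must verify the uniform-in-$j$ claim that each residue has $\nc+O(1)$ representing pairs, and that the at most two diagonal pairs together with the cyclic wraparound of indices past $0$ contribute only lower-order corrections both to $\e(X)$ and to the coupling bounds. Once this uniformity is established, the remainder of the argument is a direct transcription of the truncated proof, with $\aln$ replaced by $n$ and $\ic$ by $\nc$; correspondingly the constant $1/\alpha$ appearing in \eqref{p1} becomes $1$. The replacement of the geometric prefactor $1/p^2$ by the linear factor $n$ in $\e(X)$ is also what causes the $\log\log n$ correction in \eqref{p2} to disappear in the modular threshold.
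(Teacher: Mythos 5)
Your proposal is correct and takes exactly the approach the authors intend: the paper proves Theorem~\ref{thm:old} by remarking that it is ``proved similarly'' to Theorem~\ref{thm:2trun} via the same Stein--Chen coupling, and your translation (uniform count $\nc$ of representing pairs in place of $\ic$, threshold constant $1/\alpha$ becoming $1$, and the disappearance of the $\log\log n$ correction because the prefactor $1/p^2$ becomes $n$) is precisely the adaptation they suppress.
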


The second part of Theorem \ref{thm:old} was proven in \cite{ya} using Janson's correlation inequalities and in  \cite{sandor} using the method of Brun's sieve (see for example \cite{as}).  In \cite{sandor}, the author is able to derive  part i.~of Theorem \ref{thm:old} at the threshold value of $p$, while the Stein-Chen method allows us to derive the result in a window about the threshold, thus somewhat generalizing the previous known results.

\begin{rem}
\emph{
We have so far dealt with random sets of fixed expected size, and now indicate briefly how we can easily transition to the case of random sets of fixed size.  This can be done for all the results in this paper, but we indicate the method in the context of Theorem 5:  Choose one of ${{n+1}\choose{\vca}}$ families randomly, and suppose $\vca=\sqrt{Kn\log n}; K>\frac{2}{\alpha}$ (this corresponds to the expected size of $\ca$ with $p=\sqrt{K\log n/n}$).
Then we reconcile the two models as follows with $p=\sqrt{K\log n/n}$:
\begin{eqnarray*}
&&{}\p(\ca\ {\rm is\ not\ a\ basis}\vert \vca=\sqrt{Kn\log n})\\
&&{}\le\p(\ca\ {\rm is\ not\ a\ basis}\vert \vca\le\sqrt{Kn\log n})\\
&&{}\le\p(\ca\ {\rm is\ not\ a\ basis})/\p(\vca\le\sqrt{Kn\log n})\\
&&{}\approx 2\p(\ca\ {\rm is\ not\ a\ basis})\to0,
\end{eqnarray*}
by Theorem 5 and the central limit theorem.  A similar argument holds if $K<\frac{2}{\alpha}$, or even if $\vca=\sqrt{\frac{2}{\alpha}n\log n-\frac{2}{\alpha}n\log\log n+nA_n}$, where $\vert A_n\vert\to\infty$.  It follows that
we can easily go back and forth from the independent model to the fixed set size model except possibly when we are at the threshold, i.e., when
\[p={\sqrt{\frac{\frac{2}{\alpha}\log n-\frac{2}{\alpha}\log \log n+A_n}{n}}},\]
with $A_n\to A\in{\mathbb R}$.}
\end{rem}

\section{Modular $k$-Additive Bases}  Let us briefly  now turn our attention to modular $k$-additive bases.  We define $\ca$ as in Section 2, and will choose integers to be in $\ca$ with probability $p=p_n$.  Define
$$\mathcal{S}:=\{x_1+x_2+\ldots+x_k \ \pmod n | x_i\in \ca\}.$$  We define $I_j$ to be 0 if $j\in \mathcal{S}$ and 1 if $j\notin \mathcal{S}$.  As before, let $X=\sum_{j=0}^{n-1}I_j$.  In \cite{sandor} it was proven, again using the method of Brun's sieve, that
 \begin{thm}
With $X$ defined as above, if we choose elements of $[0,n-1]$ to be in $\ca$ with probability
$$p=\sqrt[k]{\frac{k!\log n+A_n}{n^{k-1}}},$$ 
where $\vert A_n\vert=o(\log n)$, then 
$$\p(\ca\ is\ a\ modular\ k-basis)\rightarrow\begin{cases} 0 & \mbox{if}\ A_n\rightarrow-\infty\\  1 & \mbox{if}\ A_n\rightarrow \infty \\ \exp\{- e^{-A/k!}\} & \mbox{if}\ A_n\rightarrow A\in{\mathbb R}\end{cases}.$$ 
Also, if $Y\sim\P(\e(X))$,  
$$d_{TV}(X,Y)\rightarrow 0 \mbox{ as } n\rightarrow \infty.$$ .
\end{thm}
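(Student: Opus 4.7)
The plan is to carry over the Stein-Chen template from the proof of Theorem \ref{thm:2trun}, now significantly streamlined by the $\z/n\z$-translation symmetry of the modular setting.

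First, I would establish the asymptotics of $\lambda := \e(X)$. By translation invariance, $\p(I_j=1)$ does not depend on $j$, so $\lambda = n\p(I_0=1)$. The number of unordered $k$-multisets of $[0,n-1]$ summing to $0\pmod n$ is $n^{k-1}/k! + O(n^{k-2})$, each contained in $\ca$ with probability essentially $p^k$. Because distinct multisets share only $O(n^{k-2})$ pairs of elements, Janson's inequality (employed in Section 4 of the paper for analogous baseline estimates on truncated $k$-bases) yields $\p(I_0=1) = (1+o(1))\exp\{-n^{k-1}p^k/k!\}$. Substituting $p^k = (k!\log n + A_n)/n^{k-1}$ then gives $\lambda \sim e^{-A_n/k!}$, which has the three claimed limits. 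Markov's inequality disposes of the case $A_n\to\infty$.

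Next, for each $j$ I would construct a coupled sequence $(J_{j_i})_{i\neq j}$ realizing the conditional law $\cl(I_1,I_2,\ldots\mid I_j=1)$ in direct analogy with the $k=2$ construction: if $I_j=1$, set $J_{j_i}=I_i$; if $I_j=0$, then for each $k$-multiset summing to $j\pmod n$ whose elements all lie in $\ca$, remove a nonempty subset of its members according to the conditional law of ``which subset is missing from $\ca$, given that not all are present.'' Since removals only destroy representations, $\p(I_i=1,J_{j_i}=0)=0$, and the Stein-Chen bound reduces to
$$d_{TV}(X,Y)\leq \frac{1-e^{-\lambda}}{\lambda}\sum_j\left[\p(I_j=1)^2 + \p(I_j=1)\sum_{i\neq j}\p(I_i=0,J_{j_i}=1)\right].$$
The diagonal term is at most $\max_j\p(I_j=1)=\lambda/n\to 0$. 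For the cross term, condition on $B_i$, the number of $k$-multisets for $i$ contained in $\ca$, and argue (paralleling the $C(p)=2p+p^2+2p^3$ bound in the proof of Theorem \ref{thm:2trun}) that the probability a given $k$-multiset for $i$ is broken by the coupling for $j$ is at most $C_k(p)=O\left((\log n)^{(k-1)/k}n^{-1/k}\right)$, obtained by union-bounding over the $O(n^{k-2})$ multisets for $j$ containing any fixed element (each present in $\ca$ with probability $\le p^{k-1}$, and contributing conditional removal probability $(1-p)/(1-p^k)$). Telescoping the sum over $k'$ then yields a cross-term contribution of order $n^{k-1}p^k\cdot C_k(p)\asymp \log n\cdot C_k(p)\to 0$.

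The main technical obstacle is the combinatorial case analysis needed to justify the bound on $C_k(p)$ for arbitrary $k$: multisets with repeated elements require separate accounting (as with the split between odd and even $i$ in the $k=2$ proof), and one must check that the conditional removal probabilities for $k$-subsets do not conspire to produce too large a net removal rate. Each lower-order configuration should carry an additional factor of $p$ or $1/n$ relative to the generic case, preserving the stated order of $C_k(p)$. Once $d_{TV}(X,Y)\to 0$ is established, the three probability limits follow from $\p(X=0)=e^{-\lambda}+o(1)$ together with the three asymptotic regimes for $\lambda$ identified in the first step.
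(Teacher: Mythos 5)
Your asymptotics for $\lambda=\e(X)\sim e^{-A_n/k!}$ are correct, and your cross-term accounting (the factor $n^{k-1}p^k\approx k!\log n$ from the expected number of present $k$-multisets, the factor $C_k(p)=O(n^{k-2}p^{k-1})=O((\log n)^{(k-1)/k}n^{-1/k})$ from the per-multiset removal probability, with $n\cdot e^{-n^{k-1}p^k/k!}$ nearly cancelling) matches what a careful telescoping would produce and does tend to zero in the required window. But there is a genuine gap at the heart of your construction: the explicit per-multiset coupling does not obviously realize the conditional law $\cl(I_1,I_2,\ldots\mid I_j=1)$ for $k\ge 3$. The $k=2$ case is special because the pairs summing to $j$ are pairwise disjoint --- if $x+y=j=x+z$ then $y=z$ --- so conditioning on $I_j=1$ factorizes across pairs and independent per-pair deselection is exactly right. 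For $k\ge 3$ the $k$-multisets summing to $j$ share elements (e.g.\ $\{1,2,3\}$ and $\{1,1,4\}$ both sum to $6$), so the conditional distribution of $\ca$ given $I_j=1$ does not factorize across multisets, and independent per-multiset removal will in general over- or under-remove relative to the true conditional. You would need either a genuinely global deselection rule whose output law you can verify, or a different mechanism entirely.

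The paper itself flags this exact difficulty: ``Such a coupling can {\it probably} be described explicitly as we did for $k=2$ but there is no need to do so.'' Instead it observes that each $I_j$ is a decreasing function of the i.i.d.\ Bernoulli selectors, so a monotone coupling satisfying the required conditional-law identity is guaranteed to exist by the general theory in Barbour--Holst--Janson, with no need to write it down. It then uses the variance form of the Stein--Chen bound (Theorem 2.E and Corollary 2.C.4 of \cite{bhj}), reducing everything to showing $\v(X)/\lambda - 1\to 0$ and $\max_j\p(I_j=1)\to 0$. The covariance term $\e(I_0I_1)$ is bounded via Janson's inequality, giving $\v(X)/\lambda - 1 \lesssim \lambda\, n^{2k-3}p^{2k-1}$, which tends to zero exactly when $p>(Ak!\log n/n^{k-1})^{1/k}$ with $A>(k-1)/k$ --- the same window your heuristic count predicts. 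So your quantitative instincts are sound, but to make the argument rigorous you should either replace your explicit coupling with the monotonicity-existence argument and compute $\v(X)$ directly via Janson's inequality as the paper does, or else prove that some concrete global deselection scheme realizes the correct conditional law.
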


Using the Stein-Chen method we are able to reproduce the above result and prove the total variation convergence for all $p\in[p_0,1]$,where $p_0>\sqrt[k]{\frac{Ak!\log n}{n^{k-1}}}$, with $A>(k-1)/k$.  We will provide proof for our contribution to the window of convergence, omitting proofs when results were derived also in \cite{sandor}

With $X$ as above, we first derive (as in \cite{sandor}) that
$$\e(X)= n \cdot \exp\left\{-\frac{n^{k-1}p^k}{k!}(1+O(1/np))\right\}.$$ 

\noindent Following \cite{bhj}, we first need to determine, for each $j$ separately, an auxiliary sequence  of variables $J_{j_i}$ defined on the same probability space with the property that 
\begin{equation}
\label{eq:coup}
\cl(J_{j_1},J_{j_2},\ldots)=\cl(I_1,I_2,\ldots|I_j=1).\end{equation}
Such a coupling can {\it probably} be described explicitly as we did for $k=2$ but there is no need to do so:  It is clear that the more integers in $\ca$, the higher the probability that $I_j$ is 0, as integer $j$ is more likely to be representable as a $k$-sum.  Therefore, the $I_j$'s are decreasing functions of the baseline i.i.d.~random variables $\{Y_i\}$ that have distribution $\p(Y_0=1)=p; \p(Y_0=0)=1-p$, and a monotone coupling satisfying (\ref{eq:coup}) exists.  Thus 
with $\lambda:=\e(X)$ and 
$Y\sim$\P($\lambda$), we can apply Theorem 2.E and Corollary 
2.C.4 in \cite{bhj} to get 
\begin{align*}
d_{TV}(X,Y)&\leq \frac{1-e^{-\lambda}}{\lambda}\left(\v(X)-\lambda+2\sum_{j=0}^{n-1}\p^2(I_j=1)\right)\\
&\leq \frac{\v(X)}{\lambda}-1+2\p(I_0=1)\\
\end{align*}

Now, from the asymptotic formula of $\e(X)$, we have that 
$2\p(I_0=1)\asymp 2\text{exp}\left\{-\frac{n^{k-1}p^k}{k!}\right\}$ and this goes to 0 as $n\rightarrow\infty$ if
$$p=\frac{k!^{1/k}\phi(n)}{n^{(k-1)/k}}$$
where $\phi(n)\rightarrow\infty$ as $n\rightarrow\infty$ is arbitrary.

We next need to show that $\frac{\v(X)}{\lambda}=1+o(1)$.
\begin{align*}
\v(X)&=\v\left(\sum_{j=0}^{n-1}I_j\right)\\
&=\sum_{j=0}^{n-1} \v(I_j)+2\sum_{i<j}\left(\e(I_iI_j)-\e(I_i)\e(I_j)\right)\\
&=\sum_{j=0}^{n-1}\e(I_j)-\sum_{j=0}^{n-1}\e(I_j)^2+2\sum_{i<j}\e(I_iI_j)-n(n-1)\e(I_0)\e(I_1)\\
&= \lambda-(n+n(n-1)) \e^2(I_0)+2\sum_{i<j}\e(I_iI_j),
\end{align*}
so that
\begin{equation}
\label{eq:var}
\frac{\v(X)}{\lambda}-1=\frac{(n-1)\e(I_0I_1)}{\e(I_0)}-n\e(I_0).\end{equation}
Next, we need to bound the growth of $\e(I_iI_j)$ for $i<j$.  Now $\e(I_iI_j)$ equals the probability then neither $i$ nor $j$ is in $\cs$, and thus $\e(I_iI_j)=\p(S=0)$, where
$$S=\sum_{r=i,j}\sum_{l=1}^k\sum_{{{\bf a}:=\{a_1,\ldots,a_l\}:{\sum a_i=r}}}J_{r,l,{\bf a}},$$ 
where 
$J_{\bf a}:=J_{r,l,{\bf a}}$ equals one if the $l$ integers in {\bf a} that sum to $r$ (with possible repetition) are all selected to be in $\ca$.  We have that (see for example Lemma 1.5 in \cite{sandor})
\[\e(S)=2\frac{{n \choose k}}{n}p^k(1+O(1/np)).\]  Set $\{r,l,{\bf a}\}\sim\{s,m,{\bf b}\}$ if ${\bf a}\cap{\bf b}\ne\emptyset; \{r,l,{\bf a}\}\ne\{s,m,{\bf b}\}$.  For the associated dependency graph, we see that
\[\Delta=\sum_{\{r,l,{\bf a}\}}\sum_{\{s,m,{\bf b}\}\sim\{r,l,{\bf a}\}}\e(J_{\bf a}J_{\bf b})=O(n^{2k-3}p^{2k-1})=O(n^{k-2}p^{k-1}\e(S)),\]
and thus by Janson's inequality we have
\[\e(I_0I_1)\le e^{-\e(S)+\Delta}=\exp\{-2\frac{{n \choose k}}{n}p^k(1+O(1/np)+O(n^{k-2}p^{k-1})\}.\]
Returning to (\ref{eq:var}), we see that
\begin{eqnarray*}&&\frac{\v(X)}{\lambda}-1\\&\le&n\frac{\lr\exp\{-2\frac{{n \choose k}}{n}p^k(1+O(1/np)+O(n^{k-2}p^{k-1})\}-\exp\{-2\frac{{n\choose k}}{n}p^k(1+O(1/np))\}\rr}{\exp\{-\frac{{n \choose k}}{n}p^k(1+O(1/np))\}}\\
&=&n\frac{\exp\{-2\frac{{n\choose k}}{n}p^k(1+O(1/np))\}}{\exp\{-\frac{{n \choose k}}{n}p^k(1+O(1/np))\}}\lr\exp\{-2\frac{{n\choose k}}{n}p^kO(n^{k-2}p^{k-1})\}-1\rr\\
&\sim&\lambda\cdot n^{2k-3}p^{2k-1}.
\end{eqnarray*}
Thus, the total variation distance goes to 0 if 
\[n^{2k-2}p^{2k-1}\exp\{-n^{k-1}p^k/k!\}\to0, \] which holds if $p>\frac{(Ak!\log n)^{1/k}}{n^{(k-1)/k}}, A>\frac{k-1}{k}$. Poisson approximation is thus valid in a window that includes the threshold value of $p=\frac{(k!\log n)^{1/k}}{n^{(k-1)/k}}$.  

\section{Truncated $k$-bases}

The analysis for truncated bases is more subtle than that for modular bases, since edge effects need to be handled carefully.  Following the same setup as before, we wish to represent each $j\in[\a n, (k-\a)n]$ as $j=x_1+\ldots+x_k$ for $x_i\in \ca$, where $x_1\le x_2\le\ldots\le x_k$.  Again, for each $j$, let $I_j$ equal 1 if $j$ cannot be expressed as a $k$-sum of elements of $\ca$ and 0 otherwise.  We set $X:=\sum_{j=\a n}^{(k-\a)n}I_j$, and note that $X=0\Leftrightarrow\ca$ is a  truncated $k$-basis.
\begin{thm}
\label{thm:ktrun}
With $X$ defined as above, if we choose elements of $\{0\}\cup[n]$ to be in $\ca$ with probability
$$p=\sqrt[k]{\frac{K\log n-K \log{\log{n}}+A_n}{n^{k-1}}},$$ 
where $\vert A_n\vert=o(\log{\log{n}})$ and $K=K_{\a,k}=\frac{k!(k-1)!}{\a^{k-1}}$, then 
$$\p(\ca\ is\ a\ truncated\ k-basis)\rightarrow\begin{cases} 0 & \mbox{if}\ A_n\rightarrow-\infty\\  1 & \mbox{if}\ A_n\rightarrow \infty \\ \exp\{- \frac{2\alpha}{k-1}e^{-A/K}\} & \mbox{if}\ A_n\rightarrow A\in{\mathbb R}\end{cases}.$$
Also for $$p=\sqrt[k]{\frac{\beta K\log n}{n^{k-1}}},$$ $\beta>(k-1)/k$, and $Y\sim\P(\e(X))$,  
$$d_{TV}(X,Y)\rightarrow 0 \mbox{ as } n\rightarrow \infty.$$
\end{thm}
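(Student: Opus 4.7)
The plan is to mimic the architecture used for Theorem~\ref{thm:2trun} and the modular $k$-basis theorem, while accommodating the edge effects inherent to the truncated setting. The argument has four stages: (a) compute the asymptotics of $\lambda:=\e(X)$; (b) dispatch the $A_n\to\infty$ branch by Markov; (c) establish a Poisson approximation via Stein--Chen with a monotone coupling; (d) read off the three limits using (a) and (c).

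For (a), I would apply Janson's inequality to each $\p(I_j=1)=\p(S_j=0)$, where
$$S_j=\sum_{\substack{0\le x_1\le\cdots\le x_k\le n\\ x_1+\cdots+x_k=j}}J_{(x_1,\ldots,x_k)}$$
and $J_{(x_1,\ldots,x_k)}=1$ iff all listed $x_i$ belong to $\ca$. The number of nondecreasing representations of $j\in[\aln,(k-\alpha)n]$ is asymptotic to $j^{k-1}/(k!(k-1)!)$, up to negligible corrections from the upper-edge constraint $x_i\le n$. Since the dependency correction $\Delta$ in Janson's inequality is of lower order (as in the modular analysis), $\p(I_j=1)\sim\exp(-\e(S_j))$. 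With $K=k!(k-1)!/\alpha^{k-1}$ and writing $j=\aln+t$,
$$\e(S_j)\asymp (1+t/(\aln))^{k-1}\bigl(\log n-\log\log n+A_n/K\bigr).$$
Summing the resulting exponentials over $t\ge 0$ at both edges $j\approx\aln$ and $j\approx(k-\alpha)n$ via a geometric-tail comparison gives
$$\lambda=(1+o(1))\,\frac{2\alpha}{k-1}\,e^{-A_n/K},$$
so $\lambda\to\infty$, $0$, or $\frac{2\alpha}{k-1}e^{-A/K}$ in the three regimes of $A_n$.

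For (b), Markov gives $\p(X\ge 1)\le\lambda\to 0$ when $A_n\to\infty$. For (c), each $I_j$ is a decreasing function of the underlying i.i.d.\ Bernoulli inclusion variables, so a monotone coupling satisfying \eqref{eq:coup} exists; by Theorem~2.E and Corollary~2.C.4 of \cite{bhj}, as invoked in the modular case,
$$d_{TV}(X,Y)\le\frac{\v(X)}{\lambda}-1+2\max_{\aln\le j\le(k-\alpha)n}\p(I_j=1).$$
The maximum is attained at the edges and equals $\p(I_{\lceil\aln\rceil}=1)\asymp(\log n)/n\to 0$. The heart of the argument is thus to show $\v(X)/\lambda\to 1$, i.e.\ $\sum_{i\ne j}\mathrm{Cov}(I_i,I_j)=o(\lambda)$. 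For each pair $(i,j)$, Janson's inequality applied to $S_i+S_j$ yields
$$\e(I_iI_j)\le\exp\bigl(-\e(S_i)-\e(S_j)+\Delta_{i,j}\bigr),$$
where $\Delta_{i,j}$ enumerates ordered pairs of overlapping representations (one of $i$, one of $j$) sharing at least one coordinate. Since $\Delta_{i,j}=o(1)$ for the specified $p$, $\mathrm{Cov}(I_i,I_j)$ is dominated by $\e(I_i)\e(I_j)\Delta_{i,j}$; the overlap enumeration parallels the modular computation and produces an aggregate of order $\lambda\cdot n^{2k-3}p^{2k-1}$, which vanishes precisely when $\beta>(k-1)/k$.

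The principal obstacle is the covariance bookkeeping: without the translation symmetry of the modular setting, one cannot reduce $\sum_{i\ne j}\mathrm{Cov}(I_i,I_j)$ to a single representative pair and must instead verify that the sum is dominated by contributions near the edges $j\approx\aln$ and $j\approx(k-\alpha)n$, precisely where Poisson mass concentrates. Classifying the overlap patterns in $\Delta_{i,j}$ by the number of shared coordinates between a representation of $i$ and one of $j$, then integrating the resulting bounds against the edge-dependent weights, is technical but structurally parallel to the $k=2$ and modular arguments. Once this is completed, the three claimed limits follow from $\p(X=0)\to e^{-\lambda}$ together with (a).
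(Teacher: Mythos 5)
Your proposal follows essentially the same architecture as the paper's proof: Janson's inequality to get $\p(I_j=1)=\exp\{-|T_j|p^k(1+o(1))\}$, a tail estimate (geometric in your case, integration-by-parts \`a la Gaussian tails in the paper's) to extract $\lambda\sim\frac{2\alpha}{k-1}e^{-A_n/K}$, and Stein--Chen via a monotone coupling / positive-relatedness bound with a second application of Janson to control $\e(I_jI_\ell)$. The computation of $\e(S_j)$ in terms of $t=j-\alpha n$ checks out and gives the right constant, and the paper likewise shows the bulk contribution $j\in[n,(k-1)n]$ is negligible.

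Two small corrections and one reassurance. First, your claim that $\max_j\p(I_j=1)\asymp(\log n)/n$ is tied to the exact threshold $p$; over the full range $p=\sqrt[k]{\beta K\log n/n^{k-1}}$ one gets $n^{-\beta}$, which still tends to $0$, so the conclusion survives. Second, the ``principal obstacle'' you flag --- the apparent need to classify overlap patterns against edge-dependent weights because translation invariance is lost --- is in fact not an obstacle at all, and the paper does not do this. Janson gives the \emph{uniform} bound $\Delta_{j,\ell}=O(n^{2k-3}p^{2k-1})$ for every pair $(j,\ell)$, since the worst overlap (two $k$-tuples meeting in a single coordinate) has the same order regardless of where $j,\ell$ sit. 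Consequently
\[
\sum_{j\ne\ell}\mathrm{Cov}(I_j,I_\ell)\le\Bigl(e^{O(n^{2k-3}p^{2k-1})}-1\Bigr)\sum_{j\ne\ell}\e(I_j)\e(I_\ell)\lesssim n^{2k-3}p^{2k-1}\lambda^2,
\]
so the sum factors cleanly into $\lambda^2$ times a uniform $o(1)$, and dividing by $\lambda$ gives the stated condition $\beta>(k-1)/k$. So once you observe that $\Delta_{j,\ell}$ needs no edge-sensitive treatment, your outline is complete and coincides with the paper's argument.
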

Before proving this theorem, we need some preliminary work.  Let $S_j$ be the set of all unordered $k$-tuples of nonnegative integers in $\{0\}\cup[n]$ that sum to $j$.

\begin{cm}
For $j\in[\a n,n]$, $$|S_j|=j^{k-1}/[(k-1)!k!]+O(j^{k-2}).$$
\end{cm}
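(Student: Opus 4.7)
The plan is to reduce the unordered count to an ordered count via stars-and-bars, with careful bookkeeping of the contribution from tuples that have repeated entries.

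First I would observe that since $j \le n$, every nonnegative integer solution to $x_1+\cdots+x_k=j$ automatically satisfies $x_i \le j \le n$, so the upper-bound restriction in $\{0\}\cup[n]$ is vacuous. Let $T_j$ denote the set of \emph{ordered} $k$-tuples of nonnegative integers summing to $j$. By the standard stars-and-bars argument,
\[
|T_j| = \binom{j+k-1}{k-1} = \frac{(j+k-1)(j+k-2)\cdots(j+1)}{(k-1)!} = \frac{j^{k-1}}{(k-1)!} + O(j^{k-2}),
\]
where the implied constant depends only on $k$.

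Next I would separate $T_j$ according to whether the entries of the tuple are pairwise distinct. Let $T_j^{\mathrm{dist}}\subseteq T_j$ consist of tuples with $k$ pairwise distinct entries, and let $T_j^{\mathrm{rep}}=T_j\setminus T_j^{\mathrm{dist}}$. A tuple in $T_j^{\mathrm{rep}}$ has some pair of equal coordinates; fixing which pair is equal and to what value, the remaining $k-2$ coordinates are constrained to sum to a fixed quantity and thus contribute $O(j^{k-3})$ ordered choices, while the equal value can be chosen in $O(j)$ ways. Summing over the $\binom{k}{2}$ choices of the equal pair gives $|T_j^{\mathrm{rep}}| = O(j^{k-2})$, and hence $|T_j^{\mathrm{dist}}| = j^{k-1}/(k-1)! + O(j^{k-2})$.

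Finally, each unordered $k$-tuple with distinct entries corresponds to exactly $k!$ ordered tuples in $T_j^{\mathrm{dist}}$, while each unordered $k$-tuple with at least one repetition corresponds to at most $k!$ ordered tuples in $T_j^{\mathrm{rep}}$. Writing $S_j = S_j^{\mathrm{dist}}\sqcup S_j^{\mathrm{rep}}$ accordingly,
\[
|S_j^{\mathrm{dist}}| = \frac{|T_j^{\mathrm{dist}}|}{k!} = \frac{j^{k-1}}{k!(k-1)!} + O(j^{k-2}), \qquad |S_j^{\mathrm{rep}}| \le |T_j^{\mathrm{rep}}| = O(j^{k-2}),
\]
and adding these yields the claim. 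No step here is a genuine obstacle; the only mild subtlety is verifying that the $j \le n$ hypothesis renders the upper-bound constraint irrelevant, which is why the range $j\in[\alpha n, n]$ (rather than the full $[\alpha n,(k-\alpha)n]$) appears in the statement.
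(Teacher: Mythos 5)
Your proof is correct and follows essentially the same route as the paper's: count ordered $k$-tuples by stars and bars, discard the $O(j^{k-2})$ tuples with a repeated coordinate, and divide the remainder by $k!$. The only cosmetic difference is that you bound the repeated-entry tuples by $\binom{k}{2}\cdot O(j)\cdot O(j^{k-3})$ whereas the paper writes $\binom{k}{2}\cdot n\cdot\binom{j+k-3}{j}$, which is the same bound since $j\asymp n$ on the range in question.
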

\begin{proof}
The number of ordered $k$-tuples of nonnegative integers that sum to $j$ is $\binom{j+k-1}{j}\sim\frac{j^{k-1}}{(k-1)!}$.  All such tuples will be composed entirely of numbers in $\{0\}\cup[j]$, and at most ${k \choose 2}\cdot n\cdot {{j+k-3}\choose{j}}=O(j^{k-2})$ of these contain a number repeated once or more often.  We can disregard these in the asymptotic analysis, and consider the remaining unordered and ordered tuples.  Each remaining unordered tuple appears $k!$ times among the remaining ordered tuples, giving us the desired first order asymptotics.
\end{proof}

\begin{cm}
If $1\le j\le kn/2$, then
$|S_j|=|S_{kn-j}|$
\end{cm}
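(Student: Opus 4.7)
The plan is to exhibit an explicit, involutive bijection between $S_j$ and $S_{kn-j}$ based on coordinatewise complementation with respect to $n$. Specifically, I would define
\[
\phi:S_j\to S_{kn-j},\qquad \phi\bigl(\{x_1,x_2,\ldots,x_k\}\bigr)=\{n-x_1,n-x_2,\ldots,n-x_k\},
\]
and verify it is well-defined and a bijection. The construction is symmetric in the $x_i$'s, so it respects the unordered multiset structure, which is the only point where one needs to be careful.

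First I would check well-definedness: if $\{x_1,\ldots,x_k\}\in S_j$ then each $x_i\in\{0\}\cup[n]$, so each $n-x_i$ also lies in $\{0\}\cup[n]$, and $\sum_{i=1}^k (n-x_i)=kn-\sum_{i=1}^k x_i=kn-j$, so the image really lies in $S_{kn-j}$. Second, $\phi$ is its own inverse: applying the complementation a second time returns $\{n-(n-x_i)\}=\{x_i\}$. Hence $\phi$ is a bijection, giving $|S_j|=|S_{kn-j}|$.

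There is essentially no obstacle here; the only subtlety worth a sentence in the writeup is that ``unordered $k$-tuple'' should be interpreted as a multiset (repetitions allowed), and complementation preserves multiplicities, so the map is genuinely well defined on multisets rather than sets. The hypothesis $1\le j\le kn/2$ plays no role in the bijection itself — it is merely the range in which the identity is non-trivial, since for $j>kn/2$ the roles of $j$ and $kn-j$ are swapped.
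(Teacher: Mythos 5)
Your proof is correct and uses exactly the same coordinatewise complementation bijection $\{x_1,\ldots,x_k\}\leftrightarrow\{n-x_1,\ldots,n-x_k\}$ that the paper gives, with the same (brief) verification. The extra remark about multisets is a reasonable clarification but does not change the argument.
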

\begin{proof}
There is a bijection between $k$-tuples in $S_j$ for $1\le j\le kn/2$ and those in $S_j$ for $j\ge kn/2$ given by $\{x_1,x_2,\ldots,x_k\}\leftrightarrow\{n-x_1,n-x_2,\ldots,n-x_k\}$.
\end{proof}
Lastly, we have:
\begin{cm}
For $j\in[n+1,(k-1)n]$, 
$$|S_j|\ge \frac{n^{k-1}}{(k-1)!k!}+O(n^{k-2}).$$
\end{cm}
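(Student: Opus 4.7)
The plan is to combine Claims 1 and 2 with the unimodality of the sequence $(|S_j|)_{j=0}^{kn}$. The key observation is that $|S_j|$ is exactly the number of integer partitions of $j$ fitting inside a $k\times n$ Young diagram, so that
\[
\sum_{j=0}^{kn}|S_j|q^j=\binom{n+k}{k}_{q},
\]
the Gaussian binomial coefficient. By Sylvester's classical unimodality theorem, this coefficient sequence is unimodal, and when combined with the symmetry $|S_j|=|S_{kn-j}|$ from Claim 2, it is nondecreasing on $\{0,1,\ldots,\lfloor kn/2\rfloor\}$ and nonincreasing on $\{\lceil kn/2\rceil,\ldots,kn\}$, with maximum at $\lfloor kn/2\rfloor$.

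Applying Claim 1 at $j=n$ (the right endpoint of its hypothesis interval $[\alpha n,n]$) gives $|S_n|=n^{k-1}/[(k-1)!\,k!]+O(n^{k-2})$, and Claim 2 then yields $|S_{(k-1)n}|=|S_n|$. For any $j\in[n+1,(k-1)n]$, either $n\le j\le kn/2$, in which case unimodality gives $|S_j|\ge|S_n|$, or $kn/2\le j\le(k-1)n$, in which case Claim 2 reduces the question to $|S_{kn-j}|$ with $kn-j\in[n,kn/2]$, and unimodality again gives $|S_{kn-j}|\ge|S_n|$. Either way $|S_j|\ge n^{k-1}/[(k-1)!\,k!]+O(n^{k-2})$, which is the desired bound.

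The main obstacle is justifying the unimodality appeal. A self-contained alternative is to work at the level of ordered $k$-tuples: if $P_k(j)$ denotes the number of ordered $k$-tuples in $[0,n]^k$ summing to $j$, then the argument used in Claim 1 (where tuples with repeated entries contribute only $O(n^{k-2})$) yields $|S_j|=P_k(j)/k!+O(n^{k-2})$, so it suffices to show $P_k(j)\ge P_k(n)=\binom{n+k-1}{k-1}$ for $j\in[n,(k-1)n]$. One could attempt this via the inclusion-exclusion formula $P_k(j)=\sum_{i\ge 0}(-1)^i\binom{k}{i}\binom{j-i(n+1)+k-1}{k-1}$ and monotone estimation of its alternating terms, or by constructing an explicit injection from ordered tuples summing to $n$ into those summing to $j$; naive injections (e.g.\ incrementing the smallest or largest sub-$n$ entry) fail due to sorted-order collisions, so more care is needed, which is why the Gaussian-binomial route above is cleaner.
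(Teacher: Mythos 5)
Your argument is correct and follows the paper's own route essentially verbatim: identify $|S_j|$ as the coefficient of $q^j$ in the Gaussian binomial $\binom{n+k}{k}_q$, invoke unimodality of these coefficients (the paper cites O'Hara's constructive proof, you cite Sylvester's original theorem --- same fact), and combine the symmetry of Claim 12 with the evaluation $|S_n|=n^{k-1}/[(k-1)!\,k!]+O(n^{k-2})$ from Claim 11 to conclude $|S_j|\ge|S_n|$ on the whole interval $[n+1,(k-1)n]$. You spell out the case split around $kn/2$ a bit more explicitly than the paper's terse ``the proof now follows directly from Claim 12,'' but it is the same proof; the alternative routes you sketch (inclusion--exclusion, explicit injections) are not pursued and are not needed.
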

\begin{proof}
We will use the fact that $|S_j|$, i.e. the number of partitions of $j$ of size at most $k$ each part of which is less than or equal to $n$, is the coefficient of $q^j$ in the $q$-binomial coefficient 
$$\binom{n+k}{k}_q:=\frac{(1-q)(1-q^2)\cdots(1-q^{n+k})}{(1-q)(1-q^2)\cdots(1-q^{n})(1-q)(1-q^2)\cdots(1-q^{k})}.$$
It is well known, see for example \cite{oh}, that $\binom{n+k}{k}_q=\sum a_i q^i$ is a polynomial in $q$ and that the coefficients are unimodal, namely $a_{j-1}<a_j$ for $j\leq nk/2$.  Claim 11 yields that
$$a_n=|S_n|=\frac{n^{k-1}}{(k-1)!k!}+O(n^{k-2}),$$
and the proof now follows directly from Claim 12.
\end{proof}

We next begin our analysis of $\e(X)$ with a preliminary claim.
\begin{cm}
With $S_j$ defined as above, 
$$\e(X)= \sum_{j=\a n}^{(k-\a)n}\exp\left(-|T_j|p^k(1+o(1))\right),$$
where $T_j$, the set of $k$-tuples of distinct elements that add to $j$ satisfies
$$|T_j|\begin{cases} =j^{k-1}/(k-1)!k!, & \mbox{if } j\in[\a n,n] \\ \ge {n^{k-1}}/{(k-1)!k!} & \mbox{if } j\in[n,(k-1)n]\\
=(kn-j)^{k-1}/(k-1)!k! & \mbox{if } j\in[(k-1)n,(k-\a)n] \end{cases}$$
\end{cm}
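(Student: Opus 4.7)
The plan is to apply Janson's exponential inequality to each $\p(I_j=1)$ and then read off the first-order contribution from the distinct-tuple set $T_j$. For each $j\in[\aln,(k-\alpha)n]$, let $A_{\bf a}$ denote the increasing event that every element of the (possibly repeated) tuple ${\bf a}\in S_j$ lies in $\ca$; then $\{I_j=1\}=\bigcap_{{\bf a}\in S_j}A_{\bf a}^c$. Writing $\mu_j:=\sum_{{\bf a}\in S_j}\p(A_{\bf a})$ and $\Delta_j:=\sum_{{\bf a}\sim{\bf b}}\p(A_{\bf a}\cap A_{\bf b})$ (over unordered pairs of distinct tuples sharing at least one element), Janson's inequality together with the Harris--FKG lower bound yields
\[
\prod_{{\bf a}\in S_j}\bigl(1-\p(A_{\bf a})\bigr)\le \p(I_j=1)\le \exp\bigl(-\mu_j+\Delta_j/2\bigr),
\]
so the sum $\e(X)=\sum_j\p(I_j=1)$ assumes the claimed form once we show $\mu_j=|T_j|p^k(1+o(1))$ and $\Delta_j=o(\mu_j)$.

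Next, I would peel off the contribution of non-distinct tuples to $\mu_j$. A tuple in $S_j$ using only $r<k$ distinct integers contributes $p^r$ to $\mu_j$, and the count of such tuples is $O(n^{r-1})$ because the $r$ entries in $\{0,\ldots,n\}$ must satisfy a single linear constraint. Hence these contribute $O\bigl(\sum_{r<k}n^{r-1}p^r\bigr)=O\bigl(|T_j|p^k\sum_{r<k}(np)^{r-k}\bigr)=o(|T_j|p^k)$ in the relevant range, where $np\to\infty$. Combined with $\log(1-p^k)=-p^k+O(p^{2k})$, the FKG product collapses to $\exp(-|T_j|p^k(1+o(1)))$.

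Third, I would read off $|T_j|$ from the preceding claims. For $j\in[\aln,n]$, the Claim-11 computation gives $|S_j|=j^{k-1}/((k-1)!k!)+O(j^{k-2})$, and removing the $O(j^{k-2})$ repeated tuples preserves the leading term. For $j\in[(k-1)n,(k-\alpha)n]$ the involution ${\bf x}\mapsto(n-x_1,\ldots,n-x_k)$ of Claim 12 is a distinctness-preserving bijection between $S_j$ and $S_{kn-j}$, so the first-range asymptotics transfer via $j\mapsto kn-j$. For the middle range $j\in[n,(k-1)n]$, the $q$-binomial unimodality argument supplies only the stated lower bound on $|T_j|$; this will suffice because this range contributes negligibly to $\e(X)$, being dominated by the edge terms where $|T_j|$ is smallest and $\exp(-|T_j|p^k)$ correspondingly largest.

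The main obstacle is the bound on $\Delta_j$. For distinct $k$-subsets ${\bf a},{\bf b}$ sharing exactly $t$ elements with $1\le t\le k-1$, one has $\p(A_{\bf a}\cap A_{\bf b})=p^{2k-t}$, and for fixed ${\bf a}$ the number of ${\bf b}$'s with the prescribed sum sharing any $t$ specified elements is $O(n^{k-t-1})$ (by the same linear-constraint count as Claim 11). Summing,
\[
\frac{\Delta_j}{\mu_j}=O\!\left(\sum_{t=1}^{k-1}n^{k-t-1}p^{k-t}\right)=O\!\left(\sum_{t=1}^{k-1}(np)^{k-t-1}p\right),
\]
and substituting the target $p\sim(K\log n/n^{k-1})^{1/k}$ of Theorem \ref{thm:ktrun} shows each summand has order $(\log n)^{(k-t)/k}/n^{t/k}\to 0$ for $t\ge 1$ (when $k\ge 3$; the $k=2$ case is already settled in Section 2 by an exact coupling), which is the quantitative input that closes the argument.
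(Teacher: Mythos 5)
Your proposal is correct and takes essentially the same route as the paper: Janson's inequality for the upper bound on $\p(I_j=1)$, the Harris--FKG/product bound for the lower bound, stripping the non-distinct tuples as a $o(|T_j|p^k)$ correction using $np\to\infty$, and inheriting the $|T_j|$ asymptotics from Claims 11--13. The only cosmetic difference is that you sum $\Delta_j/\mu_j$ over all overlap sizes $t$ while the paper simply records the dominant $t=1$ term $O(n^{2k-3}p^{2k-1})$; your parenthetical caveat about $k=2$ is unnecessary, since your bound $(\log n)^{(k-t)/k}n^{-t/k}\to0$ already holds at $k=2$, $t=1$.
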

\begin{proof}
Fix $j$, and let $\{B_{(j,i)}\}_{i=1}^{|S_j|}$ be the event that the $i$th unordered $k$-tuple of elements of $\{0\}\cup[n]$ that add to $j$ is in $\ca$.  
Then by Janson's inequality, we have
\begin{align*}
\p(I_j=1)&=\p(\cap_{i} B_{(j,i)}^C)\\
&\geq (1-p)^{O(1)}(1-p^2)^{O(j)}(1-p^3)^{O(j^2)}\ldots(1-p^{k-1})^{O(j^{k-2})}(1-p^k)^{|T_j|}\\
&\geq \exp(-\frac{O(1)p}{1-p}-\frac{O(j)p^2}{1-p^2}-\ldots\frac{|T_j|p^k}{1-p^k})\\
&= \exp\bigg(-|T_j|p^k\left(\frac{1}{1-p^k}+\frac{O(j^{k-2})}{(1-p^{k-1})|T_j|p}+\ldots+\frac{O(1)}{(1-p)|T_j|p^{k-1}}\right)\bigg)\\
&=\exp \lbrace -|T_j|p^{k}(1+o(1))\rbrace \\
\end{align*}
as for $x\in(0,1)$, $1-x\geq e^{-x/(1-x)}$, and with our choice of $p$, $np\rightarrow\infty$ and 
for all $j$, we have $j=O(n)$.

For an upper bound, Janson's inequality yields 
$$P(I_j=1)\leq \exp\left(-|T_j|p^{k-1}(1+o(1))+\Delta_j/2\right).$$
Now $\Delta_j=O(n^{2k-3}p^{2k-1})$ since the maximal contribution to $\Delta_j$ is when the two tuples have distinct elements and intersect in a single element; as for all values of $p$ in our window, $np\to\infty$.  Thus 
\begin{align*}
P(I_j=1)&\leq \exp\left(-|T_j|p^{k-1}+O(n^{2k-3}p^{2k-1})\right)\\
&=\exp\left(-|T_j|p^{k}(1+O(n^{k-2}p^{k-1}))\right).
\end{align*}
With $p=o(n^{-(k-2)/(k-1)})$, a condition satisfied by all $p's$ in our theorem, $n^{k-2}p^{k-1}\rightarrow 0$, finishing the proof. 
\end{proof}
We are now ready to begin the proof of Theorem \ref{thm:ktrun}. 
\begin{proof}
We now have that (with $(1+o(1))$ terms suppressed
$$\e(X)= 2\sum_{j=\a n}^n \exp\{-p^kj^{k-1}/{(k-1)}!k!\}+\sum_{j=n}^{(k-1)n} \exp\{-p^k|T_j|\}.$$
The first summation, which we will call $\Sigma_1$ (correspondingly calling the second sum $\Sigma_2$) can be bounded above as follows, recalling the definition of $K$ from the statement of the theorem, and setting $B=\a^{k-2}/k!(k-2)!$:
\begin{align*}
\Sigma_1&\leq 2\sum_{j=\a n}^{\infty} \exp\{-p^kj^{k-1}/(k-1)!k!\}\\
&\leq 2 \exp\{-n^{k-1}p^k/K\}\left(\sum_{j=0}^{\infty} \lr\exp\{-(k-1)(n\a)^{k-2}p^k/(k-1)!k!\}\rr^j\right)\\
&=\frac{2 \exp\{-n^{k-1}p^k/K\}}{1-\exp\{-Bn^{k-2}p^k\}}\\
&\leq \frac{2 \exp\{-n^{k-1}p^k/K\}}{Bn^{k-2}p^k}(1+Bn^{k-2}p^k)\\
&=\frac{2 \exp\{-n^{k-1}p^k/K\}}{Bn^{k-2}p^k}(1+o(1))
\end{align*}
where we used the facts that $\frac{x}{x+1}\leq 1-e^{-x}$ in the fourth line and, in the final line of the display, that for all choices of $p$ considered in the theorem, $n^{k-2}p^k\rightarrow 0$.  Finally, the geometric bound in the second line follows from the fact that the ratio of consecutive terms in the sum satisfies
\[\exp\{-\frac{p^k}{(k-1)!k!}[(j+1)^{k-1}-j^{k-1}]\}\ge \exp\{-\frac{p^k}{(k-1)!k!}(k-1)(\a n)^{k-2}\}\]

For $\Sigma_2$ we have
\begin{eqnarray*}
\Sigma_2&=&\sum_{j=n}^{(k-1)n}\exp\{-p^k|T_j|\}\\
&\le&(k-2)n\exp\{-p^kn^{k-1}/(k-1)!k!\}\to0
\end{eqnarray*} 
if $$p\ge\lr\frac{D\log n}{n^{k-1}}\rr^{1/k}; D>(k-1)!k!$$

A lower bound on $\Sigma_1$ (and thus on $\e(X)$) is obtained by using elementary integration by parts to derive a tight estimate for the integral
\[\int_x^\infty e^{-t^{k-1}}dt; x\to\infty,\]
in much the same way that Gaussian tails are analyzed (which is the $k=3$ case.)  We have 
\begin{eqnarray}
\e(X)&\ge&2\sum_{j=\a n}^n \exp\{-p^kj^{k-1}/{(k-1)}!k!\}\nonumber\\
&\sim&2\int_{\a n}^n \exp\{-p^kx^{k-1}/{(k-1)}!k!\}dx\nonumber\\
&=&2\int_{\a n}^\infty \exp\{-p^kx^{k-1}/{(k-1)}!k!\}dx\nonumber\\
&\quad&-2\int_n^\infty \exp\{-p^kx^{k-1}/{(k-1)}!k!\}dx.
\end{eqnarray}
Setting, for $t>0$, $\Psi(t,k)=\int_t^\infty\exp\{-Cx^{k-1}\}dx$, we see that
\begin{eqnarray}\label{eq:gauss}\Psi(t,k)&\le&\frac{1}{t^{k-2}}\int_t^\infty x^{k-2}\exp\{-Cx^{k-1}\}dx\nonumber\\
&=&\frac{1}{C(k-1)t^{k-2}}\int_{Ct^{k-1}}^\infty e^{-u}du\nonumber\\
&=&\frac{1}{C(k-1)t^{k-2}}\exp\{-Ct^{k-1}\},
\end{eqnarray}
and, for another two constants $E, E'>0$,
\begin{eqnarray}
\Psi(t,k)&=&\int_t^\infty \frac{x^{k-2}}{x^{k-2}}\exp\{-Cx^{k-1}\}dx\nonumber\\
&=&\frac{1}{C(k-1)t^{k-2}}\exp\{-Ct^{k-1}\}-\frac{E}{C}\int_t^\infty\frac{1}{x^{k-1}}\exp\{-Cx^{k-1}\}dx\nonumber\\
&\ge&\frac{1}{C(k-1)t^{k-2}}\exp\{-Ct^{k-1}\}-\frac{E'}{C^2}\frac{1}{t^{2k-3}}\exp\{-Ct^{k-1}\}
\end{eqnarray}
using (\ref{eq:gauss}).
Thus, since in our context $C^2t^{2k-3}\gg Ct^{k-2}$, $$\Sigma_1\asymp \frac{2(k-1)!k!}{(k-1)\a^{k-2}n^{k-2}p^k}\exp\{-\a^{k-1}n^{k-1}p^k/(k-1)!k!\}.$$
It is easy to verify that $\Sigma_2=o(1)\Sigma_1$, and thus $$\e(X)\sim\Sigma_1\asymp\frac{2(k-2)!k!}{\a^{k-2}n^{k-2}p^k}\exp\{-\a^{k-1}n^{k-1}p^k/(k-1)!k!\} .$$

Let $\lambda=\e(X)$ and let $Y\sim \P(\lambda)$.  First we note that $\e(X)$ tends to zero, infinity, or $\frac{2\alpha}{k-1}e^{-A/K}$ if $p$ is as stated as in the theorem with $A_n\to\infty$, $A_n\to-\infty$ or $A_n\to A$ respectively.  To complete the proof of Theorem \ref{thm:ktrun}, we use Poisson approximation to show that the total variation distance between $X$ and $Y$ converges to 0, so that, in particular, $\p(X=0)\to e^{-\lambda}$.  

Using Theorem 2.C of \cite{bhj} with $\Gamma_{\alpha}^0=\Gamma_{\alpha}^{-}=\emptyset$,
$$d_{TV}(X,Y)\leq\frac{1-e^{-\lambda}}{\lambda}\left(\sum_j\p^2(I_j=1)+\sum_j\sum_\ell (\e(I_jI_\ell)-\e(I_j)\e(I_\ell))\right);$$ the above is just a variation of the bound used in the proof of Theorem \ref{thm:2trun}.
Now 
\begin{align*}
\frac{1-e^{-\lambda}}{\lambda}\sum_j\p^2(I_j=1)&\leq \mbox{max}_j\p(I_j=1)\\
&\leq \mbox{max}_j\exp\{-|T_j|p^k(1+o(1))\}\\
&\le e^{-\delta n^{k-1}p^k(1+o(1))},
\end{align*}
where $\delta$ is a constant not depending on $p$ or $n$, and this term converges to 0 if $p=\sqrt[k]{\frac{G\log n}{n^{k-1}}}$ for any constant $G>0$.  For the double sum, we start by using the estimates
\[\p(I_i=1)\ge\exp\{-\vert S_i\vert p^k(1+O(1/np))\}\] for $i=j,\ell$, and for the cross product term $\e(I_jI_\ell)$ we use Janson's inequality as in our Stein-Chen treatment of Theorem 9.
For fixed $j,\ell$, note that  $\p(I_jI_\ell=1)=\p(S=0)$, where 
$$S=\sum_{r=j,\ell}\sum_{s=1}^k\sum_{{{\bf a}:=\{a_1,\ldots,a_s\}:{\sum a_i=r}}}J_{r,l,{\bf a}},$$ 
where 
$J_{\bf a}:=J_{r,s,{\bf a}}$ equals one if the $s$ integers in {\bf a} that sum to $r$ (with possible repetition) are all selected to be in $\ca$.  We then use Janson's inequality and the worst case scenario estimate for the $\Delta$ that arises to estimate
\[\e(I_jI_\ell)\le\exp\{-(\vert S_j\vert+\vert S_\ell\vert)p^k(1+O(1/np))+O(n^{2k-3}p^{2k-1})\}\]
and then conclude that
\begin{eqnarray*}
&&\sum_j\sum_{\ell\ne j} (\e(I_jI_\ell)-\e(I_j)\e(I_\ell))\\
&&{}\le \sum_j\sum_{\ell\ne j}\exp\{-(\vert S_j\vert+\vert S_\ell\vert)p^k(1+O(1/np))\}\lr e^{O(n^{2k-3}p^{2k-1})}-1\rr\\
&&\sim n^{2k-3}p^{2k-1}\l^2,
\end{eqnarray*}
so that 
\[\frac{\sum_j\sum_{\ell\ne j} (\e(I_jI_\ell)-\e(I_j)\e(I_\ell))}{\l}\le n^{2k-3}p^{2k-1}\l\to0\]
if $\l\ll1/n^{2k-3}p^{2k-1}$, i.e., if 
\[{\frac{2(k-2)!k!n^{k-1}p^{k-1}}{\a^{k-2}}\exp\{-\a^{k-1}n^{k-1}p^k/(k-1)!k!\}}\to 0,\]
i.e., if $p=\sqrt[k]{\frac{[\beta K]\log n}{n^{k-1}}}$, where $\beta>(k-1)/k$, as claimed.  \end{proof}

\noindent{\bf Open Problems}	
In both the modular and truncated cases, the representation function question has yet to be addressed.  We have preliminary results in this direction and plan on publishing them in a future paper.
\vspace{3mm}

\noindent{\bf Acknowledgment}  The research of AG, CML, and NGT was supported by NSF grant 1004624.  The collaboration between AG and VL was made possible due to the support of the Acheson J. Duncan Fund for the
Advancement of Research in Statistics.  VL was supported by U.S. Department of Education GAANN grant P200A090128.  We thank the anonymous referee for useful comments and for making us aware of key references (especially the work of Sandor \cite{sandor}).  The paper has improved significantly as a result.

\end{document}